\documentclass[11pt,a4paper]{article}
\usepackage[LGR, T1]{fontenc} % LGR is required for Greek
\usepackage[utf8]{inputenc}
\usepackage[greek, main=english]{babel}
\usepackage{authblk}

\usepackage{mathpazo}

\usepackage{amsmath}
\usepackage{amssymb}
\usepackage{amsthm}
\usepackage{mathtools}
\usepackage{xcolor}
\usepackage{hyperref}
\usepackage[left=2cm, right=2cm, top=2cm, bottom=2.5cm]{geometry}
\definecolor{red}{RGB}{235,16,16}
\definecolor{green}{RGB}{0,128,40}
\definecolor{gray}{RGB}{128,128,128}

\let\oldtext\text
\renewcommand{\text}[1]{\oldtext{\fontfamily{cmr}\selectfont #1}}

\theoremstyle{plain}
\newtheorem{theorem}{Theorem}[section]
\newtheorem{lemma}[theorem]{Lemma}
\newtheorem{claim}[theorem]{Claim}

\theoremstyle{definition}
\newtheorem{definition}[theorem]{Definition}

\newcommand{\kaliField}{\mathbb{F}}
\newcommand{\kaliIm}{\operatorname{Im}}

\newcommand{\tab}{\hspace{15pt}} 
\newcommand{\setof}[1]{\left\{ {#1}\right\}} 
\newcommand{\red}{\star}

\begin{document}

\title{\Huge \textbf{Discrete Morse Theory for Computing Filtered Cosheaf Homology}}
\author{\Large Neria Baris}
\affil{\small Mathematical Institute, University of Oxford}
\date{}
\maketitle

% % --- Epigraphs (Right-aligned and compact) ---
% \vspace{-1em} % צמצום המרווח מהכותרת

% \begin{flushright}
%     % Iliad Quote
%     \begin{minipage}{0.6\textwidth}
%         \footnotesize % פונט מעט קטן יותר לחיסכון במקום
%         \textgreek{ὡς δ᾽ ἄνεμος ἄχνας φορέει ἱερὰς κατ᾽ ἀλωὰς \\
%         ἀνδρῶν λικμώντων, ὅτε τε ξανθὴ Δημήτηρ \\
%         κρίνῃ ἐπειγομένων ἀνέμων καρπόν τε καὶ ἄχνας...} \\[0.3em]
%         \textit{"As the wind carries the chaff about the sacred threshing-floors when men are winnowing, and fair-haired Demeter separates the grain and the chaff..."} \\[0.2em]
%         \scriptsize --- Homer, \textit{The Iliad} 5.499
%     \end{minipage}
    
%     \vspace{1.5em} % מרווח מצומצם בין שני הציטוטים
    
%     % Odyssey Quote
%     \begin{minipage}{0.6\textwidth}
%         \footnotesize
%         \textgreek{ὁππότε κεν δή τοι συμβλήμενος ἄλλος ὁδίτης \\
%         φήῃ ἀθηρηλοιגὸן ἔχειν ἀνὰ φαιδίμῳ ὤμῳ, \\
%         καὶ τότε דὴ γαίῃ πήξας εὐῆρες ἐρετμόν...} \\[0.3em]
%         \textit{"I will give you a clear sign: whenever a wayfarer meets you and says you have a winnowing fan on your shoulder, then fix your oar in the earth."} \\[0.2em]
%         \scriptsize --- Homer, \textit{The Odyssey} 11.126
%     \end{minipage}
% \end{flushright}

% \vspace{1.5em}
% ---------------------------------------

\section{Introduction}

\subsection{Computational Topology and Cosheaves}

Homology and cohomology computations are the daily bread of Topological Data Analysis (TDA), and they appear in a wide variety of applications - sensor networks \cite{desilva2006coordinate}, distributed computing \cite{herlihy1999topological} and more (see the list brought in \cite{curry2015discretemorsetheorycomputing}). Persistent homology \cite{carlsson2009theory,zomorodian2005computing} generalizes classical and static homology to dynamic situations, and allows examination of how topological features change across different scales. It is used to distinguish essential features from noise, it is stable against perturbations since it is less affected by changes or noises in the original input, and it allows us to encode complex geometric information into an algebraic and invariant object \cite{cohensteiner2005stability,henselman2017matroids,yoon2018cellular,mischaikow2013morse,nanda2022tda,ghrist2014elementary}. These computations have many applications, such as image processing and computer vision \cite{clough2020topological}, computational models in biology and chemistry \cite{cang2018integration,townsend2020representation,cang2017topologynet}, and data analysis in neuroscience \cite{dabaghian2012topological}.

In the last decade, an additional and powerful topological structure has joined them – Cellular Sheaves, which allow encoding constraints, local coefficients, and algebraic information that varies from place to place over a fixed topological space \cite{curry2014sheavescosheavesapplications}. Among the promising applications, one can find machine learning \cite{ayzenberg2025sheaftheorydeepgeometry,su2025topologicaldataanalysistopological,hu2026sheaftheoretictopologicalperspectivecomplex}, spectral signal processing \cite{robinson2014topological}, and even kinematics of structures and origami \cite{cooperband2023cosheaf}.

However, both computing persistent homologies and computing sheaf cohomologies are computationally very expensive. A significant breakthrough in dealing with this barrier was made possible thanks to the application of discrete Morse theory on the source complex \cite{curry2015discretemorsetheorycomputing,nanda2012discrete,mischaikow2013morse,nanda2022tda}. This technique allows reducing the topological space to a smaller but topologically equivalent space, so that the cost of these computations decreases.

At the same time, until now these approaches have focused on the filtration \cite{yegnesh2017persistencesheaves,wei2025persistent}, i.e., change over time, of the topological space (such as a classical Vietoris-Rips filtration on a point cloud) - and not in cases where precisely the geometry of the space is fixed but the local information sitting on it and the nature of the connections between the nodes change over time. For example, a sensor network, or a communication graph. However, it should be noted that \cite{russold2022persistentsheafcohomology} proposes a unifying framework for both types of filtration, but its focus is different from ours.

Since standard persistent homology is actually a private case of persistent homology on cosheaves obtained when the cosheaf is the constant cosheaf, it begs to use cosheaves to generalize the theory to dynamic systems where the information itself changes. To save on computational cost, a necessary step is adapting the tools of discrete Morse theory to this case.

\subsection{Problem Formulation and Results}

Our problem is optimizing the computation of the persistent homology of a cosheaf filtration over a fixed cell complex. We will do this using discrete Morse theory. Our input consists of a static topological skeleton and a sequence of cosheaves defining the evolution of the information on it along the filtration. Using our algorithm, \texttt{Filtered\_Scythe}, we will locate an acyclic Morse pairing compatible with all time steps, and simultaneously compute the boundary matrices of the reduced complex (the homotopy-equivalent one), which allows an efficient and fast extraction of the persistence module.

First, let us define the parameters measuring the complexity of the complex and the filtration.

1. Let $N$ denote the cardinality of the cell complex $K$ (number of cells).

2. Let $p$ be the maximal number of cofaces for any cell, i.e., $p=\max_{x\in K}|x_{+}|$.

3. Let $D$ denote the maximal dimension of the stalks $\mathcal{F}_{i}(\tau)$ as vector spaces, which is also the maximal size of the extension matrices.

4. Assume that the pairing $\Sigma$ generated by the algorithm has $m_{k}$ critical cells of dimension $k$, and define the remaining critical complex density measure as $\tilde{m}=\sum_{k}m^{2}_{k}$ \cite{mischaikow2013morse}; put $M=\sum_k m_k$.

5. Let $\omega\approx 2.8$ be the matrix multiplication exponent.

6. Let $n$ be the number of filtration steps.

Notice that variables 1, 2, 3 and 6 are input parameters, 4 is an output parameter (depending on the efficiency of the Morse field pairing), and 5 is an algebraic property of operations on matrices. Our central result gathers the costs of scanning and Schur complement operations.

\begin{theorem} \label{thm:complexity}
Let $\mathcal{F}_{\bullet}$ be a filtration of length $n$ of cosheaves over a fixed cell complex $K$, and let $N, p, D, M, \tilde{m}$, and $\omega$ be the parameters defined above. Under the queuing rules specified below and constant-time adjacency lookup, the time complexity to build the reduced complex $K^{\Sigma}$ and its dynamically updated matrices under the \texttt{Filtered\_Scythe} algorithm is bounded by $O((n+1)NpMD^\omega+N\tilde m)$. The space complexity required to store the complex and its matrices across all filtration steps is bounded by $O((n+1)N(p+M)D^2)$.
\end{theorem}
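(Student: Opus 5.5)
The bound is a cost accounting over the phases of \texttt{Filtered\_Scythe}: each cell is popped a bounded number of times, and each pop does at most a fixed number of dense block operations of size at most $MD$. I would follow the structure of the Scythe analysis of Mischaikow--Nanda and Curry--Ghrist--Nanda, treating each of the $n+1$ filtration steps (the base cosheaf plus $n$ updates) as carrying its own copy of the restriction/extension data. The algorithm has three parts: (i) a single combinatorial pass that builds the acyclic pairing $\Sigma$ compatible with all steps; (ii) for each step $i$, a Schur-complement style elimination that propagates the gradient-path corrections into the boundary blocks indexed by critical cells; (iii) assembly of the reduced matrices of $K^{\Sigma}$. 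I would bound each part separately and add the results.

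For the pairing phase, the queuing rules ensure that each cell enters the queue at most once. When a cell $x$ is processed, we scan its at most $p$ cofaces; adjacency lookup is constant time. To certify that a pair $(x,y)$ is admissible at every step, we check invertibility of the extension map $\mathcal{F}_i(y)\to\mathcal{F}_i(x)$ (or its block) for each $i$. That costs $O(D^\omega)$ per step, so this phase costs $O((n+1)NpD^\omega)$, which is dominated by the main term.

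The elimination phase is where the main term comes from. Each cell $x$ carries, for every step $i$, a ``flow'' row recording its boundary into critical cells. This row has at most $M$ block entries, each a $D\times D$ matrix. When $x$ is processed, it receives contributions from at most $p$ neighbours. Each contribution is a block-row update of the form $A - B C^{-1} E$: inverting a $D\times D$ block costs $D^\omega$, and the product with an $M$-block row costs $O(MD^\omega)$, with $O(D^\omega)$ per block. Summing over $N$ cells, $p$ neighbours and $n+1$ steps gives $O((n+1)NpMD^\omega)$. The additional $N\tilde m$ term accounts for indexing and bookkeeping. For each cell we maintain sparse index sets of critical cells in adjacent dimensions $k$ and $k\pm1$. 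Merging or assembling these costs $O(m_k m_{k-1})\le O(\tilde m)$ per cell, independent of $D$ and of $n$, because the critical set is shared across steps. This gives $O(N\tilde m)$.

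For space, we store, for each step, the original coboundary data: $N$ cells, each with at most $p$ blocks of size $D^2$, giving $O((n+1)NpD^2)$. We also store the flow rows: $N$ cells, each with at most $M$ blocks of size $D^2$, giving $O((n+1)NMD^2)$. Together these yield $O((n+1)N(p+M)D^2)$.

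The main obstacle is justifying that each cell's row is updated only $O(p)$ times per step, rather than once per gradient path. Gradient paths can be exponentially many, so this amortisation is essential. I would argue it from acyclicity of $\Sigma$ together with the queuing order: cells are processed in an order compatible with the Morse flow, a linear extension of the acyclic partial order. When a cell is finalised, its row is already fully reduced, so each coface edge contributes exactly one Schur update. I expect the bulk of the real work to lie in checking this invariant carefully, and in checking that compatibility across all $n+1$ steps does not force reprocessing.
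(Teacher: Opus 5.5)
The gap is the step you yourself defer as the ``main obstacle'', and the mechanism you propose for it is not the one that works here. \texttt{Filtered\_Scythe} does not build $\Sigma$ in a separate pass and then propagate flow rows finalised along a linear extension of the gradient order. It cancels pairs on the fly, and \texttt{ReducePair} applies the Schur update in place to the blocks $F_i(z\geq w)$ for $z\in x_+\setminus\{y\}$ and $w\in y_-\setminus\{x\}$.

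In-place elimination never enumerates gradient paths, so their number is irrelevant. The real danger is fill-in: after earlier reductions, $x$ could a priori have far more than $p$ cofaces and $y$ arbitrarily many faces. Then neither your neighbour bound nor your row-length bound is justified.

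The missing idea is the paper's observation that a pair is cancelled only when $x$ is the \emph{unique} active non-critical face of $y$, so every $w\in y_-\setminus\{x\}$ is already critical. Consequently:
\begin{itemize}
\item every incidence created by \texttt{ReducePair} gives a new coface only to a critical cell;
\item a non-critical cell keeps at most its $p$ original cofaces until it is cancelled;
\item $|y_-\setminus\{x\}|\leq m_k$.
\end{itemize}
Each cancellation then costs $O((n+1)p\,m_kD^\omega)$, and summing over at most $N/2$ cancellations gives $O((n+1)NpMD^\omega)$. The same fact makes your claim that each coface edge contributes one Schur update true: every update to a cell charges to one of its original incidences, at most once. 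It also bounds the fill-in blocks per cell by $M$ in your space count. An ordering argument alone does not control fill-in.

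Your other counts misread the stated queuing rules. The enqueue flags are reset in every outer iteration, and there is one outer iteration per critical cell. So a $(k+1)$-cell can be dequeued once in each of the $m_k$ iterations started from a critical $k$-cell. That gives up to $\sum_k m_kN_{k+1}\leq NM$ candidate tests, not $O(N)$, each costing $O((n+1)D^\omega)$. This still fits inside the main term, but your premise that every cell is queued once is false.

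Likewise, the $N\tilde m$ term does not come from merging index sets; the algorithm has no such step. It is the cost of rescanning lower-neighbour lists over those repeated dequeues. Each list consists of original faces plus up to $m_k$ critical fill-in faces, which gives $\sum_k m_kE_k+\sum_k N_{k+1}m_k^2\subseteq O(NpM+N\tilde m)$.
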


While finding the absolute minimum number of critical cells is computationally prohibitive, the efficiency of our approach stems from the fact that even a near-optimal matching drastically lowers $\tilde{m}$. Since the time complexity in Theorem \ref{thm:complexity} scales linearly with this density measure, reducing the size of the critical complex is the primary engine for accelerating the homology computation across the filtration steps.

\textbf{The structure and design of the article, as well as the theoretical framework and methods, draw heavily upon the work presented in the groundbreaking articles \cite{mischaikow2013morse,curry2015discretemorsetheorycomputing}.
}

Chapter 2 includes the necessary background on sheaves, cosheaves, cosheaf filtration, and discrete Morse theory suitable for our needs.In Chapter 3 we will prove that a reduction of a chain complex using the critical elements is a chain itself and that it is chain-homotopy equivalent to the original chain. In the fourth chapter, we will provide an algorithm to perform the computation, prove its correctness, and calculate the memory and runtime complexity it requires, and thus prove the above Theorem \ref{thm:complexity}; the last chapter will deal with possible directions for future research. 
\section{Background}

In this part, we will review the required material regarding cellular sheaves and cosheaves \cite{curry2014sheavescosheavesapplications,curry2015discretemorsetheorycomputing,ghrist2014elementary,nanda2022tda}, persistence modules \cite{fersztand2024harder,zomorodian2005computing,ghrist2014elementary,dey2024efficientalgorithmscomplexespersistence,nanda2012discrete} and the algebraic version of discrete Morse theory \cite{forman1998morse,skoldberg2006morse,mischaikow2013morse,nanda2012discrete,nanda2022tda,chen2025algebraicmorsetheoryhomological,jollenbeck2009minimal}, and we will adapt it to our needs.

\subsection{(Co)sheaf Theory}

\begin{definition} \label{def:cellular_sheaf}
(Cellular sheaf) A cellular complex $K$ with the partial order $\leq$ "face of" defines a poset for us. A cellular sheaf $\mathcal{F}$ on it is a covariant functor $\mathcal{F}:\left(K,\leq\right)\rightarrow\text{Vect}_{\kaliField}$. That is, the following hold:

(1). For every simplex $\tau\in K$, $\mathcal{F}\left(\tau\right)$ is a vector space.

(2) For every $\tau\leq\tau'$ in $K$ there is a linear map $\mathcal{F}\left(\tau\leq\tau'\right):\mathcal{F}\left(\tau\right)\rightarrow\mathcal{F}\left(\tau'\right)$.

And the category axioms are preserved -

(1) For every $\tau\in K$, $\mathcal{F}\left(\tau\leq\tau\right)$ is the identity map on $\mathcal{F}\left(\tau\right)$

(2) For every triplet $\tau\leq\tau'\leq\tau''\in K$ it holds - 
\[
\mathcal{F}\left(\tau\leq\tau'\right)\circ\mathcal{F}\left(\tau'\leq\tau''\right)=\mathcal{F}\left(\tau\leq\tau''\right)
\]
\end{definition}

\begin{definition} \label{def:cosheaf}
\textbf{Cosheaf} is the corresponding contravariant functor, which we will denote $\mathcal{F}\left(\tau\geq\tau'\right):\mathcal{F}\left(\tau\right)\rightarrow\mathcal{F}\left(\tau'\right)$ ('extension map'). So that it also assigns a vector space to every simplex, and the composition holds in it in the following way - 
\[
\mathcal{F}\left(\tau\geq\tau'\right)\circ\mathcal{F}\left(\tau'\geq\tau''\right)=\mathcal{F}\left(\tau\geq\tau''\right)
\]
\end{definition}

Since sheaves and cosheaves are functors, the collection of all (co)sheaves over any complex $K$ creates functor categories - $\text{\textbf{Fun}}\left(K^{op},\text{Vect}_{\kaliField}\right),\text{\textbf{Fun}}\left(K,\text{Vect}_{\kaliField}\right)$. In such categories, the morphisms are defined as natural transformations \cite{maclane1998categories,riehl2016category}.

When we restrict ourselves to finite vector spaces these categories are equivalent (linearly dual). This is because we can define a dual functor between vector spaces, which reverses the directions of the morphisms. Thus, for every sheaf $\mathcal{F}$ there exists a cosheaf $\mathcal{F}^{\ast}$ obtained by point-wise taking of the dual space on each cell, and transposing the map. For every cell $\tau\in K$, we define $\mathcal{F}^{\ast}\left(\tau\right)=\left(\mathcal{F}\left(\tau\right)\right)^{\ast}$ and $\mathcal{F^{\ast}}\left(\tau\geq\sigma\right)=\left(\mathcal{F}\left(\sigma\leq\tau\right)^{\ast}\right)$ \cite{curry2014sheavescosheavesapplications}.

Since our target category is $\text{Vect}_{\kaliField}$, from the definition of a natural transformation it follows that every point-wise component must be a morphism in the target category - a linear map.

\begin{definition} \label{def:morphism}
(Morphism in the category $\text{\textbf{Fun}}\left(K^{op},\text{Vect}_{\kaliField}\right)$) In our case, for every $\mathcal{F},\mathcal{F}'\in\text{\textbf{Fun}}\left(K^{op},\text{Vect}_{\kaliField}\right)$, a morphism $\psi:\mathcal{F}\rightarrow\mathcal{F}'$ is a family of linear maps defined for every $\tau\in K$, $\psi_{\tau}:\mathcal{F}\left(\tau\right)\rightarrow\mathcal{F}'\left(\tau\right)$ such that for every $\tau\geq\sigma$ it holds - $\psi_{\sigma}\circ\mathcal{F}_{\tau\geq\sigma}=\mathcal{F}'_{\tau\geq\sigma}\circ\psi_{\tau}$.
\end{definition}

In category theory we use a monomorphism to generalize the concept of injective. A morphism $\phi:\mathcal{F}\rightarrow\mathcal{F}'$ will be called a monomorphism, if for every cosheaf $\mathcal{G}$ and for every pair of morphisms $\psi_{1},\psi_{2}:\mathcal{G}\rightarrow\mathcal{F}$ it holds that if $\phi\circ\psi_{1}=\phi\circ\psi_{2}$ then $\psi_{1}=\psi_{2}$ \cite{riehl2016category,maclane1998categories}. Since $\text{Vect}_{\kaliField}$ is an abelian category, the functor category into it, $\text{\textbf{Fun}}\left(K^{op},\text{Vect}_{\kaliField}\right)$, is also abelian. In an abelian category a morphism $\psi:\mathcal{G}\rightarrow\mathcal{F}$ is a monomorphism iff it is a monomorphism in every component, meaning iff it is point-wise injective \cite{weibel1994introduction}. We will note now that the fact that the category is abelian is what allows us, formally, to define homology later on.

\begin{definition} \label{def:monomorphism}
Monomorphism / Injective morphism ('embedding') in the category $\text{\textbf{Fun}}\left(K^{op},\text{Vect}_{\kaliField}\right)$ is a morphism where each of its components is a monomorphism. Since the morphisms in the category are families of linear maps, this is equivalent to the fact that for every cell $\tau\in K$, $\text{ker}\left(\phi_{\tau}\right)=\left\{ 0\right\} $.
\end{definition}

Armed with the basic concepts we can transfer the concept of filtration from complexes to the cosheaves themselves. Thus, we can compute persistent homology based on evolving data of a fixed structure. We will define a filtration and build chain complexes, based on \cite{nanda2022tda,curry2014sheavescosheavesapplications}.

\begin{definition} \label{def:filtration}
(Filtration of a cosheaf) A filtration of $\mathcal{F}$ is a sequence of cosheaves $\left\{ \mathcal{F}_{i}\right\} ^{n}_{i=0}$ where $\mathcal{F}_{n}=\mathcal{F}$ equipped with an injective morphism between any two consecutive indices, meaning for every $0\leq i\leq n-1$, there exists a morphism $\psi^{i}:\mathcal{F}_{i}\hookrightarrow\mathcal{F}_{i+1}$, such that we get - 
\[
\mathcal{F}_{0}\hookrightarrow\mathcal{F}_{1}\dots\hookrightarrow\mathcal{F}_{n-1}\hookrightarrow\mathcal{F}_{n}=\mathcal{F}
\]
\end{definition}

Meaning for every $\tau\in K$ and for every $i$ in the appropriate indices, $\psi^{i}_{\tau}:\mathcal{F}_{i}\left(\tau\right)\rightarrow\mathcal{F}_{i+1}\left(\tau\right)$ is an injective linear map; that is, $\mathcal{F}_{i}\left(\tau\right)$ is isomorphic to a subspace of $\mathcal{F}_{i+1}\left(\tau\right)$.

We will define a chain complex, similarly to our definition for regular sheaves.

\begin{definition} \label{def:chains}
(Chains over $K$ with $\mathcal{F}$-coefficients) For every dimension $m\geq 0$, the vector space of $m$-chains of $K$ with $\mathcal{F}$-coefficients is the product -
\[
\boldsymbol{\text{C}}_{m}\left(K;\mathcal{F}\right)=\bigoplus_{\text{dim}\left(\tau\right)=m}\mathcal{F}\left(\tau\right)
\]
\end{definition}

Meaning the direct sum of all the costalks attached to the cells of dimension $m$ \cite{curry2015discretemorsetheorycomputing}.

It is convenient to think of an element in $\boldsymbol{\text{C}}_{m}\left(K;\mathcal{F}\right)$ as a block-column vector. Each block corresponds to a specific cell of dimension $m$, and the content of the block is a vector in the space corresponding to the specific cell.

Recall the incidence coefficients of the complex $K$ \cite{nanda2022tda,ghrist2014elementary,kozlov2008combinatorial}: we assume that the cells in $K$ are ordered such that for every $m$-simplex $\tau$ there are $\tau_{-i}$ faces (?) for every $i\in\left\{ 0,\dots,m\right\} $. Then, for every pair of cells $\tau,\tau'$ we write - 
\[
\left[\tau':\tau\right]:=\begin{cases}
1 & \tau=\tau'_{-i}\ \text{for}\ i\ \text{even}\\
-1 & \tau=\tau'_{-i}\ \text{for}\ i\ \text{odd}\\
0 & else
\end{cases}
\]

Now we are able to define map operators between our chains:
\[
\partial^{\mathcal{F}}_{m}:\boldsymbol{\text{C}}_{m}\left(K;\mathcal{F}\right)\rightarrow\boldsymbol{\text{C}}_{m-1}\left(K;\mathcal{F}\right)
\]

As the transpose of the coboundary maps of the dual sheaf, matching the indices. Since we have already proven that the coboundary maps on sheaves indeed satisfy $\partial^{m}_{\mathcal{F}}\circ\partial^{m+1}_{\mathcal{F}}=0$, then from duality also $\partial^{\mathcal{F}}_{m+1}\circ\partial^{\mathcal{F}}_{m}=0$, as required \cite{weibel1994introduction,nanda2022tda} - since the transpose does not change the incidence coefficients and the transpose of a composition equals the composition of the transposes in reverse order.

The spaces are composed of direct sums so that $\partial^{\mathcal{F}}_{m}$ can be represented as a block matrix. In this matrix, each row corresponds to a cell $\sigma$ of dimension $m-1$ and each column corresponds to a cell $\tau$ of dimension $m$. The block at their intersection is the linear map 
\[
\partial^{\mathcal{F}}_{m}|_{\sigma,\tau}=\left[\tau:\sigma\right]\cdot\mathcal{F}\left(\tau\geq\sigma\right)
\]

For convenience, we will denote this expression as $F\left(\tau\geq\sigma\right)$.

So that the sequence 
\[
0\leftarrow\boldsymbol{\text{C}}_{0}\left(K;\mathcal{F}\right)\overset{\partial^{\mathcal{F}}_{1}}{\leftarrow}\boldsymbol{\text{C}}_{1}\left(K;\mathcal{F}\right)\overset{\partial^{\mathcal{F}}_{2}}{\leftarrow}\boldsymbol{\text{C}}_{2}\left(K;\mathcal{F}\right)...
\]
creates a chain complex over $\kaliField$.

Now, we can fully define a 'chain complex associated with the filtration'.

\begin{definition} \label{def:filtered_chain_complex}
(Filtered chain complex) For a filtration $\left\{ \mathcal{F}_{i}\right\} ^{n}_{i=0}$ of a cosheaf $\mathcal{F}$ over a complex $K$, we denote - 
\[
\boldsymbol{C}_{\bullet}\left(K,\mathcal{F}_{\bullet}\right)=\left\{ \boldsymbol{C}_{\bullet}\left(K,\mathcal{F}_{i}\right)\right\} ^{n}_{i=0}
\]
equipped with the family of injective morphisms $f^{i}_{\bullet}=\left\{ f^{i}_{j}:\boldsymbol{C}_{j}\left(K,\mathcal{F}_{i}\right)\hookrightarrow\boldsymbol{C}_{j}\left(K,\mathcal{F}_{i+1}\right)\right\} ^{n}_{j=0}$ where every $f^{i}_{j}$ is an injective linear map. Each such map is defined point-wise as a morphism of the cosheaf, meaning for every cell $\tau\in K$, $f^{i}_{j}|_{\tau}=\psi^{i}_{\tau}$ where $\psi^{i}$ is a morphism $\psi^{i}:\mathcal{F}_{i}\rightarrow\mathcal{F}_{i+1}$. Thus, we obtain an embedding - 
\[
f^{i}_{\bullet}:\boldsymbol{C}_{\bullet}\left(K,\mathcal{F}_{i}\right)\hookrightarrow\boldsymbol{C}_{\bullet}\left(K,\mathcal{F}_{i+1}\right)
\]
\end{definition}

Since the maps $f^{i}_{j}$ are induced from the cosheaf morphism $\psi^{i}$, and these are natural transformations commuting with the extension maps for every cell and face. Since the boundary operator is defined by the same extension maps, multiplied by scalar coefficients, the commutativity property is preserved and for every dimension $j$ it holds 
\[
\partial^{\mathcal{F}_{i+1}}_{j}\circ f^{i}_{j}=f^{i}_{j-1}\circ\partial^{\mathcal{F}_{i}}_{j}
\]

Meaning, if we look at two concrete cells - cell $\tau$ of dimension $j$ and a face of it $\sigma$ of dimension $j-1$, it holds:
\[
[\tau:\sigma]\mathcal{F}_{i+1}(\tau\geq\sigma)\circ\psi^{i}_{\tau}=[\tau:\sigma]\psi^{i}_{\sigma}\circ\mathcal{F}_{i}(\tau\geq\sigma)
\]

And we get this immediately from the fact that the morphism $\psi^{i}$ is a natural transformation. Therefore the definition is proper and this is indeed a chain complex morphism.

\begin{definition} \label{def:homology_group}
For every filtration level $0\leq i\leq n$ and for every dimension $k$, we define the $k$-th homology group of the complex as the quotient of the cycle space by the boundary space: 
$H_{k}(K;\mathcal{F}_{i})=\frac{\ker\left(\partial^{\mathcal{F}_{i}}_{k}\right)}{\kaliIm\left(\partial^{\mathcal{F}_{i}}_{k+1}\right)}$
\end{definition}

\subsection{Persistence Modules}

Now, we will turn to persistence modules. Recall that persistence modules are quite a simple object - $\left(V_{\bullet},a_{\bullet}\right)$ of vector spaces $V_{m}$ and linear maps $a_{m}$, where for every $i\leq j$ we define the transition map $a_{i\rightarrow j}$ as the composition of the maps between them \cite{nanda2022tda,zomorodian2005computing,ghrist2014elementary}. In categorical language, this is a functor from the poset category of the natural numbers with the natural order relation to the category of vector spaces.

The vector space of the persistence from $i$ to $j$ is the image of the transition map:
\[
\text{\textbf{PH}}_{i\rightarrow j}\left(V_{\bullet},a_{\bullet}\right)=\text{img}\left(a_{i\rightarrow j}\right)
\]

When we take our persistence module to be the homology groups (meaning, spaces) of the chain complex we built, we will get the persistence module of the cellular cosheaf: The inclusion maps we defined induce a sequence of homomorphisms between the corresponding homology groups. Since our filtration morphisms are chain morphisms, they map cycles to cycles and boundaries to boundaries, and therefore induce a linear map on the quotient spaces - 
\[
\eta_{i}:H_{k}(K;\mathcal{F}_{i})\rightarrow H_{k}(K;\mathcal{F}_{i+1})
\]

Now we will chain the spaces and the maps and we will obtain a homology persistence module for every dimension $k$ - 
\[
H_{k}(K;\mathcal{F}_{0})\overset{\eta_{0}}{\rightarrow}H_{k}(K;\mathcal{F}_{1})\overset{\eta_{1}}{\rightarrow}\dots\overset{\eta_{n-1}}{\rightarrow}H_{k}(K;\mathcal{F}_{n})
\]

We will denote the composition of these homomorphisms from step $i$ to step $j$ (where $i\leq j$) as the map $\eta_{i\to j}:H_{k}(K;\mathcal{F}_{i})\rightarrow H_{k}(K;\mathcal{F}_{j})$

Accordingly, we will define the space of the $k$-th Persistent Homology Group from step $i$ to step $j$ to be:
\[
PH^{i\to j}_{k}(K;\mathcal{F}_{\bullet})=\kaliIm\left(\eta_{i\to j}\right)
\]

This is exactly the subspace of homology classes in $H_{k}(K;\mathcal{F}_{j})$ which were born at step $i$ or earlier, and have not yet died (meaning, have not yet become trivial or a boundary of a chain of dimension $k+1$) by step $j$.

What do we actually have here? Let us stand on the difference between persistent homology on the space versus on the (co)sheaf. In persistent homology on the space the data determines the topology - the space changes as a function of time, and cells are added in each filtration. Here, on the other hand, what evolves is the information attached to the cells. 

Thus, while in persistent homology on the space, the boundary matrix grows and changes with time (a cell whose incidence coefficient was zero before can now receive a real value, for example) - here its 'structural skeleton' i.e - the incidence coefficients, is fixed, and what changes is the extension maps inside each block. Such, we have a matrix that as a block matrix looks identical - except the size of the blocks can grow as $i$ increases. This insight will help us in implementing the Morse algorithm in an efficient manner. 

After we have precisely defined our topological objects, we would want to use a tool in discrete Morse theory. To do this, in the next subsection we will adapt the regular definitions to this case.

\subsection{Morse Theory}

In regular Morse theory, deleting a pair of cells connected to each other preserves the homology because it is a "deformation retraction" \cite{forman1998morse}. In the context of (co)sheaves, to ensure we do not lose topological-algebraic information in the process, we also require that the map $\mathcal{F}(\tau\geq\sigma)$ be an isomorphism. Since, had the map been with a non-trivial kernel, the collapse could have produced false homology cycles; and if it were not surjective (meaning, a Cokernel exists), we would lose boundaries. The isomorphism ensures that the sub-complex we reduce is acyclic, and therefore the homology is preserved. The definitions below will rely heavily on \cite{mischaikow2013morse}.

\begin{definition} \label{def:compatible_pairing}
(Cosheaf-compatible and filtration-compatible pairing). For a cell complex $K$, a partial pairing $\Sigma$ is a collection of disjoint pairs such that $\sigma$ is a codimension-one face of $\tau$. The pairing is called acyclic if it does not contain closed paths. An acyclic partial pairing $\Sigma$ will be called $\mathcal{F}-\text{compatible}$ if the extension map $\mathcal{F}\left(\tau\geq\sigma\right):\mathcal{F}\left(\tau\right)\rightarrow\mathcal{F}\left(\sigma\right)$ is an isomorphism for every pair $\left(\sigma\vartriangleleft\tau\right)\in\Sigma$.
\end{definition}

\begin{definition} \label{def:filtration_compatible}
An acyclic partial pairing $\Sigma$ will be called $\mathcal{F}_{\bullet}-\text{compatible}$ for a cosheaf filtration $\mathcal{F}_{\bullet}$, if it is compatible for each step independently. Meaning, if for every $0\leq i\leq n$ and for every pair of cells $\left(\sigma\vartriangleleft\tau\right)\in\Sigma$ it holds that
\[
\mathcal{F}_{i}\left(\tau\geq\sigma\right):\mathcal{F}_{i}\left(\tau\right)\rightarrow\mathcal{F}_{i}\left(\sigma\right)
\]
is an isomorphism.
\end{definition}

\begin{definition} \label{def:critical_cell}
(Critical cell on an $\mathcal{F}_{\bullet}-c\text{ompatible}$ with an acyclic partial pairing $\Sigma$) Given $\Sigma$ an $\mathcal{F}_{\bullet}-\text{compatible}$ acyclic partial pairing, a cell $\tau\in K$ such that it does not participate in any pairing in $\Sigma$ will be called a critical cell.
\end{definition}

We will denote the set of critical simplices as $K^{c}$, since this is effectively the complementary set to the sets defining $\Sigma$.

Later we will use this requirement to formally prove that the reduced chain is chain-homotopy equivalent to the original chain and thus the isomorphism is preserved also on the homology quotient spaces.

\begin{definition} \label{def:gradient_path}
(Path, gradient path) A path of an $\mathcal{F}_{\bullet}-\text{compatible}$ acyclic partial pairing $\Sigma$ is a finite sequence of disjoint simplices on $K$ satisfying - 
\[
\rho=\left(\sigma_{1}\vartriangleleft\tau_{1}\vartriangleright\sigma_{2}\vartriangleleft\tau_{2}\dots\vartriangleright\sigma_{m}\vartriangleleft\tau_{m}\right)
\]
where $\left(\sigma_{i}\vartriangleleft\tau_{i}\right)\in\Sigma$. Such a path is a gradient path if additionally it holds that $m=1$ or $\sigma_{1}$ is not a face of $\tau_{m}$.
\end{definition}

In regular Morse theory, the weight of a gradient path is computed as the product of the incidence numbers, since we only refer to the order of the cells (one can think of this as the constant sheaf).In contrast to classical Morse theory, the coefficients of a (co)sheaf over a cell are not merely scalars, but vector spaces (see Definition \ref{def:chains}). Consequently, the connection between cells is represented by a linear map---a block in the boundary matrix. Therefore, the multiplicity of a gradient path is calculated as a composition of linear maps from the source space to the target space.

\begin{definition} \label{def:multiplicity}
(Multiplicity) For every filtration level $i\in\left\{ 0,\dots,n\right\} $ the multiplicity $M_{i}\left(\rho\right):\mathcal{F}_{i}\left(\sigma_{1}\right)\rightarrow\mathcal{F}_{i}\left(\tau_{m}\right)$ is a linear map defined 
\[
M_{i}\left(\rho\right)=\left(-F_i(\tau_m\geq\sigma_m)^{-1}\right)\circ F_i(\tau_{m-1}\geq\sigma_m)\circ\cdots\circ F_i(\tau_1\geq\sigma_2)\circ\left(-F_i(\tau_1\geq\sigma_1)^{-1}\right)
\]
We emphasize that there is an inverse map here and then a regular map, alternately. For $m=1$, this means $-F_i(\tau_1\geq\sigma_1)^{-1}$.
\end{definition}

\section{Morse Theory for Cosheaf Filtration}

Now, we can use these crispy definitions to construct a reduced chain complex and show that it indeed preserves the homological information of the original chain. The filtration on the cosheaf allows us to perform the reduction consistently across all steps of the persistence.

Formally, the proofs below are actually a private case of the Homological Perturbation Lemma \cite{skoldberg2006morse,chen2025algebraicmorsetheoryhomological}. This lemma discusses a situation where we have a complex with zero homology (like a paired pair of cells) connected to the rest of the complex via additional boundary operators that prevent us from deleting the homologically-insignificant complex without losing information. It states that the trivial pair can be deleted from the system, provided that all the information that was supposed to flow through it is routed anew. We compute the linear maps of the paths bypassing the pair, and add them to the boundary operator of the remaining cells. Thus, the boundary operator composed of the maps of the corresponding pairs is invertible, so that the complex defined with respect to these pairs is contractible. This is actually the algebraic description of a deformation retraction mentioned above. In the proof itself we will use the algebraic technique, as developed by \cite{forman1998morse,skoldberg2006morse,kozlov2008combinatorial,jollenbeck2009minimal}, and in particular using block matrices, inspired by the elegant proof of \cite{curry2015discretemorsetheorycomputing}, and we will use the connection to Schur complement from linear algebra, to clarify the process \cite{henselman2017matroids}.

\subsection{The Reduced Chain}

For every filtration level $i\in\left\{ 0,\dots,n\right\} $, we will define - 
\[
\boldsymbol{C}^{\Sigma}_{m}\left(K,\mathcal{F}_{i}\right)=\bigoplus_{\underset{\boldsymbol{\tau\in K^{c}}}{\text{dim}\left(\tau\right)=m}}\mathcal{F}_{i}\left(\tau\right)
\]

Meaning - the chain defined only from the product of the cosheaves on the critical simplices of dimension $m$\footnote{The choice of $\Sigma$ to denote the chain where we specifically \textbf{ignore} the elements of $\Sigma$ is quite strange, but it is easy to use, understood and became standard, so we will keep it.}. The inclusion morphisms of the filtration induce inclusion also between the reduced chain spaces, and allow us to generalize this naturally to $\boldsymbol{C}^{\Sigma}_{\bullet}\left(K,\mathcal{F}_{\bullet}\right)$.

\begin{definition} \label{def:reduced_boundary}
(Boundary operator of a reduced complex) We define the boundary operator as a family of boundary maps, where for each level $m$ we define - 
\[
\partial^{\Sigma}_{m}:\boldsymbol{C}^{\Sigma}_{m}\left(K;\mathcal{F}_{\bullet}\right)\rightarrow\boldsymbol{C}^{\Sigma}_{m-1}\left(K;\mathcal{F}_{\bullet}\right)
\]
where for every $i$, $\partial^{\Sigma}_{i,m}:\boldsymbol{C}^{\Sigma}_{m}\left(K;\mathcal{F}_{i}\right)\rightarrow\boldsymbol{C}^{\Sigma}_{m-1}\left(K;\mathcal{F}_{i}\right)$ is defined point-wise for every pair of cells of consecutive dimensions in $K^{c}$ by - 
\[
\partial^{\Sigma}_{i,m}|_{\sigma,\tau}=F_i(\tau\geq\sigma)+\sum_{\rho\in\Gamma(\tau,\sigma)}F_i(\tau_\ell\geq\sigma)\circ M_i(\rho)\circ F_i(\tau\geq\sigma_1)
\]
where $\Gamma(\tau,\sigma)$ consists of paths $\rho=(\sigma_1\vartriangleleft\tau_1\vartriangleright\cdots\vartriangleleft\tau_\ell)$ with $\sigma_1$ a codimension-one face of $\tau$ and $\sigma$ a codimension-one face of $\tau_\ell$; the direct term is zero unless $\sigma$ is a codimension-one face of $\tau$.
\end{definition}

The map is well defined, since we are guaranteed (from the complex being acyclic and finite) that there is a finite number of gradient paths, and each such multiplicity map is a legal composition of isomorphisms and extension maps. One can think of the map component between $\tau$ and $\sigma$ as the block in the reduced boundary matrix.

We will show that algebraically, reduction of a Morse pair is equivalent to calculating the Schur complement on the pairing block in the original boundary matrix \cite{henselman2017matroids}, and we will prove that this does not affect the boundary property.

\begin{claim} \label{claim:is_chain}
This is a chain. Meaning, $\partial^{\Sigma}\circ\partial^{\Sigma}=0$.
\end{claim}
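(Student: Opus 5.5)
Fix a filtration level $i$; the definitions of $\partial^{\Sigma}_{i,m}$ do not mix levels, so it suffices to prove $\partial^{\Sigma}_{i,m-1}\circ\partial^{\Sigma}_{i,m}=0$ for each $i$ separately. The plan is to identify $\partial^{\Sigma}_{i}$ with an iterated Schur complement of the original boundary operator $\partial^{\mathcal{F}_i}$. Then $\partial^\Sigma\circ\partial^\Sigma=0$ will follow from $\partial^{\mathcal{F}_i}\circ\partial^{\mathcal{F}_i}=0$, which we already have.

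\textbf{Block decomposition.} For each dimension $m$, split the $m$-cells into three sets: the critical cells $K^c_m$, the cells $\Sigma^{-}_m$ that are the lower member of a pair, and the cells $\Sigma^{+}_m$ that are the upper member of a pair. Accordingly,
\[
\boldsymbol{C}_m(K;\mathcal{F}_i)=C^{c}_m\oplus C^{-}_m\oplus C^{+}_m ,
\]
and $\partial_m=\partial^{\mathcal{F}_i}_m$ becomes a $3\times 3$ block matrix. The key block is $A_m:C^{+}_m\to C^{-}_{m-1}$, which collects the maps $F_i(\tau\geq\sigma)$ with $\sigma\in\Sigma^-_{m-1}$ and $\tau\in\Sigma^+_m$.

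\textbf{$A_m$ is invertible.} Order the pairs by a linear extension of the partial order induced by acyclicity of $\Sigma$ (there are no closed paths). In this order $A_m$ is block triangular. Its diagonal blocks are the isomorphisms $F_i(\tau\geq\sigma)$ for $(\sigma\vartriangleleft\tau)\in\Sigma$, which are invertible by $\mathcal{F}_\bullet$-compatibility (Definition \ref{def:filtration_compatible}). Write $A_m=D(I-N)$, with $D$ block diagonal and $N$ nilpotent. Then $A_m^{-1}=\sum_{k\ge0}N^kD^{-1}$ is a finite Neumann series. Expanding it, the $(\tau_\ell,\sigma_1)$ entry of $-A_m^{-1}$ is exactly $\sum_\rho M_i(\rho)$ over paths from $\sigma_1$ to $\tau_\ell$ (Definition \ref{def:multiplicity}). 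Sign conventions have to be checked carefully here, since the alternating $-F^{-1}$ factors must match the expansion.

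\textbf{Schur complement formula.} Next, verify that the critical-to-critical block of the Schur complement
\[
\partial^{c}_m - B_{m-1}A_m^{-1}E_m
\]
equals $\partial^\Sigma_{i,m}$ from Definition \ref{def:reduced_boundary}. Here $E_m$ is the block $C^c_m\to C^-_{m-1}$ and $B_{m-1}$ is the block $C^+_m\to C^c_{m-1}$. The term $E_m$ supplies the $F_i(\tau\ge\sigma_1)$ factors, and $B_{m-1}$ supplies the $F_i(\tau_\ell\ge\sigma)$ factors.

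\textbf{Change of basis.} Perform the Gaussian-elimination change of basis with the unitriangular automorphisms
\[
\Phi_m=\begin{pmatrix}I&0&0\\0&I&0\\-A_{m+1}^{-1}E_{m+1}&\ast&I\end{pmatrix}
\]
(and its dual on the row side). This conjugates $\partial$ into a form that is a direct sum of $\partial^\Sigma$ on $C^c$ and the complex $C^-_{m-1}\xleftarrow{A_m}C^+_m$ with identity-like differentials. Conjugating by automorphisms preserves $\partial\circ\partial=0$, and in a direct sum the square of the differential vanishes componentwise, so $\partial^\Sigma\circ\partial^\Sigma=0$.

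\textbf{Main obstacle.} The hard part is showing the conjugated differential is genuinely block diagonal. This requires the extra blocks to vanish, for instance the blocks $C^+_m\to C^+_{m-1}$ after elimination. Establishing this uses $\partial\circ\partial=0$ itself, in the form $A_{m-1}\cdot(\text{block})+\dots=0$, together with consistency of the eliminations in adjacent degrees.

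\textbf{Fallback.} If this bookkeeping becomes unwieldy, I would instead argue by induction on $|\Sigma|$, removing one pair at a time. The single-pair case is the classical computation: with $d=\begin{pmatrix}a&b\\c&D\end{pmatrix}$ and $a$ invertible, the reduced map is $D-ca^{-1}b$, and $(D-ca^{-1}b)^2=0$ follows by expanding and using $d^2=0$. Then I would check that the iterated Schur complements compose to the path-sum formula, using that acyclicity makes the order of removal irrelevant.
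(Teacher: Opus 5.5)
Your reduction of $\partial^{\Sigma}_{i,m}$ to the Schur complement $\partial^{cc}_m-B_{m-1}A_m^{-1}E_m$ is sound. Acyclicity does make $A_m$ block triangular, and the signs you flagged do match: a path through $\ell$ pairs picks up $(-1)^{\ell-1}$ in $A_m^{-1}=\sum_k N^kD^{-1}$, hence $(-1)^{\ell}$ in $-A_m^{-1}$, which is exactly the $\ell$ factors $-F_i^{-1}$ in $M_i(\rho)$. This is more explicit than the paper, which only asserts that paths split into ``length 1'' and ``length 3''.

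The gap is that the step which actually proves $\partial^{\Sigma}\circ\partial^{\Sigma}=0$ is left as your ``main obstacle'', and the target you set cannot be reached with the $\Phi_m$ you wrote. (Its corner entry should be $-A_m^{-1}E_m$, not $-A_{m+1}^{-1}E_{m+1}$, if it is to act on $\boldsymbol{C}_m$.) That $\Phi_m$ is lower unitriangular in the order $(c,-,+)$. So the $c$-row of $\Phi_{m-1}^{-1}$ is $(I\;0\;0)$ and the $+$-column of $\Phi_m$ is $(0\;0\;I)^{T}$. Hence the $C^+_m\to C^c_{m-1}$ block of $\Phi_{m-1}^{-1}\partial_m\Phi_m$ is still $B_{m-1}$, which is nonzero in general, whatever you put in the $\ast$ slot. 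A genuine splitting needs an extra entry $B_mA_{m+1}^{-1}$ in the $(c,-)$ slot, i.e.\ the decomposition $\boldsymbol{C}_m=\Psi_m(C^c_m)\oplus\partial_{m+1}(C^+_{m+1})\oplus C^+_m$.

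Your fallback is the paper's route, but there too the decisive computation is only announced. As written, $(D-ca^{-1}b)^2$ is not even defined, since the rows and columns of $d$ are split differently. The paper instead checks the compositions $p+2\to p+1\to p$ and $p+1\to p\to p-1$ separately.

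The missing idea is small: you need block triangularity, not block diagonality, and it follows directly from $\partial\circ\partial=0$. Put $\Psi_m x=x-A_m^{-1}E_mx$ for $x\in C^c_m$. By construction, $\partial_m\Psi_mx$ has zero $C^-_{m-1}$-component and $C^c_{m-1}$-component $\partial^{\Sigma}_mx$; call its $C^+_{m-1}$-component $v$. The $C^-_{m-2}$-component of $0=\partial_{m-1}\partial_m\Psi_mx$ reads $E_{m-1}\partial^{\Sigma}_mx+A_{m-1}v=0$. So $v=-A_{m-1}^{-1}E_{m-1}\partial^{\Sigma}_mx$, i.e.\ $\partial_m\Psi_m=\Psi_{m-1}\partial^{\Sigma}_m$. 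Then $\Psi_{m-2}\partial^{\Sigma}_{m-1}\partial^{\Sigma}_m=\partial_{m-1}\partial_m\Psi_m=0$. Since $\Psi_{m-2}$ is injective (its $C^c$-component is the identity), the claim follows.

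This is the global form of the paper's single-pair step $Y=-A^{-1}CX$. With it, your all-pairs-at-once argument becomes a complete proof. It differs from the paper's induction in that it derives the path-sum formula rather than asserting it. It also never has to argue that later pivots $F_i(\tau\geq\sigma)$ survive earlier reductions unchanged.
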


\begin{proof}

We will prove by induction on the number of paired pairs in the acyclic pairing $\Sigma$. We will show that the operation of reducing a Morse pair, equivalent to computing the Schur complement of the pairing block, naturally preserves the chain property.

\textbf{Base:} If $\Sigma=\emptyset$, then all simplices are critical, $\mathbf{C}^{\Sigma}_{\bullet}=\mathbf{C}_{\bullet}$, and the boundary operator $\partial^{\Sigma}$ is exactly the original boundary operator satisfying the property.

\textbf{Step}: Let us assume the claim is true for an old pairing $\Sigma_{old}$ (we will denote it for simplicity as $\partial$), and we add a new matched pair $(\sigma\triangleleft\tau)$, where $\dim(\sigma)=p$ and $\dim(\tau)=p+1$. Since $\Sigma$ is $\mathcal{F}_{\bullet}\text{-compatible}$, at every filtration stage $\mathcal{F}_{i}\left(\tau\geq\sigma\right)$ remains an iso'.

We decompose the old chain spaces into a direct sum of the new critical cells and the paired cells, at a typical filtration level
\[
\mathbf{C}_{p+1}(K;\mathcal{F}_{\bullet})=\mathbf{C}^{\Sigma}_{p+1}(K;\mathcal{F}_{\bullet})\oplus\mathcal{F}(\tau)
\]
\[
\mathbf{C}_{p}(K;\mathcal{F}_{\bullet})=\mathbf{C}^{\Sigma}_{p}(K;\mathcal{F}_{\bullet})\oplus\mathcal{F}(\sigma)
\]

With respect to this decomposition (where the critical cells block appears first and the paired cell second), we can represent the original boundary operator $\partial_{p+1}:\mathbf{C}_{p+1}(K;\mathcal{F}_{\bullet})\to\mathbf{C}_{p}(K;\mathcal{F}_{\bullet})$ as a block matrix
\[
\partial_{p+1}=\begin{pmatrix}D & B\\
C & A
\end{pmatrix}
\]

Where the components are defined by the reduction of the operator to the corresponding sub-spaces:

$\ensuremath{A=\partial|_{\sigma,\,\tau}=F_i\left(\tau\geq\sigma\right)}$ is the invertible block representing the map from $\tau$ to $\sigma$; that is, the isomorphism.

$\ensuremath{B=\partial|_{\boldsymbol{C}^{\Sigma}_{p}\left(K,\mathcal{F}_{\bullet}\right),\,\tau}}$ is the column of blocks gathering the maps from $\tau$ to all the lower critical cells.

$\ensuremath{C=\partial|_{\sigma,\,\boldsymbol{C}^{\Sigma}_{p+1}\left(K,\mathcal{F}_{\bullet}\right)}}$ is the row of blocks gathering the maps from all the upper critical cells to $\sigma$.

$\ensuremath{D=\partial|_{\boldsymbol{C}^{\Sigma}_{p}\left(K,\mathcal{F}_{\bullet}\right),\,\boldsymbol{C}^{\Sigma}_{p+1}\left(K,\mathcal{F}_{\bullet}\right)}}$ is the sub-matrix gathering the direct boundary maps between the critical cells.

$\partial^{\Sigma}_{p+1}$ is defined as the sum of multiplicities of the gradient paths between the critical cells (Definition \ref{def:reduced_boundary}). In this induction step, the paths are divided into only two types:

Paths of length 1 - transition between critical cells, represented by $D$.

Paths of length 3 - through the new pairing - start from a critical cell in $p+1$, go down to $\sigma$ - the block $C$, go against the direction from the acyclic pairing from $\sigma$ to $\tau$, where this is an isomorphism represented by $-A^{-1}$, and go down again to a critical cell in $p$ (the block $B$). The composition of these maps from right to left gives the expression $-BA^{-1}C$.

Thus, $\partial^{\Sigma}_{p+1}$ is actually defined algebraically as a Schur complement - 
\[
\partial^{\Sigma}_{p+1}=D-BA^{-1}C
\]

Meaning, exactly the remaining operator after zeroing the row of $\sigma$ and the column of $\tau$ in the block matrix.

Since the pairing took place between dimensions $p+1$ and $p$, the rest of the spaces did not change ($\ensuremath{\boldsymbol{C}_{p+2}\left(K,\mathcal{F}_{\bullet}\right)=\boldsymbol{C}^{\Sigma}_{p+2}\left(K,\mathcal{F}_{\bullet}\right)}$ and also $\ensuremath{\boldsymbol{C}_{p-1}\left(K,\mathcal{F}_{\bullet}\right)=\boldsymbol{C}^{\Sigma}_{p-1}\left(K,\mathcal{F}_{\bullet}\right)}$). Therefore, the only operator compositions that might be non-zero are those involving the reduced dimension: The transition $p+2\to p+1\to p$, and the transition $p+1\to p\to p-1$. We will check one and the other will be symmetric to it.

The transition $p+2\to p+1\to p$:

The old boundary operator $\partial_{p+2}$ splits into a block column vector with respect to the decomposition of $\boldsymbol{C}_{p+1}\left(K,\mathcal{F}_{\bullet}\right)$ - 
\[
\partial_{p+2}=\begin{pmatrix}X\\
Y
\end{pmatrix}
\]

Where the top block represents the image into the critical cells in $p+1$. Therefore, we define the new reduced boundary operator to be: $\partial^{\Sigma}_{p+2}:=X$.

From the induction hypothesis it holds that $\partial_{p+1}\circ\partial_{p+2}=0$. We will multiply the matrices - 
\[
\begin{pmatrix}D & B\\
C & A
\end{pmatrix}\begin{pmatrix}X\\
Y
\end{pmatrix}=\begin{pmatrix}DX+BY\\
CX+AY
\end{pmatrix}=\begin{pmatrix}0\\
0
\end{pmatrix}
\]

Since $A$ is invertible, from the bottom equation we get $Y=-A^{-1}CX$. We will substitute, 
\[
DX+B\left(-A^{-1}CX\right)=0\implies\left(\underset{{\color{gray}\partial^{\Sigma}_{p+1}}}{\underbrace{D-BA^{-1}C}}\right)\underset{{\color{gray}\partial^{\Sigma}_{p+2}}}{\underbrace{X}}=0
\]
Accordingly we obtained as desired
\[
\partial^{\Sigma}_{p+1}\circ\partial^{\Sigma}_{p+2}=0
\]

The transition $p+1\to p\to p-1$ is completely symmetric, except that here we operate with a block row vector.

As stated for all other dimensions, the operators did not change and therefore vanish directly from the induction hypothesis. Hence the new operator is a valid boundary operator, and the reduced sequence constitutes a chain complex as required.
\end{proof}

Now that we have proven that $\boldsymbol{C}^{\Sigma}_{\bullet}$ is a well-defined chain complex for every step in the filtration, the final stage is to show that it preserves the homological information of the original complex.

\subsection{Chain Equivalence}

\begin{theorem} \label{thm:chain_homotopy}
(Chain-homotopy equivalence) For every step $i$ in the filtration, the Morse complex $\mathbf{C}^{\Sigma}_{\bullet}(K;\mathcal{F}_{i})$ is chain-homotopy equivalent to the original complex $\mathbf{C}_{\bullet}(K;\mathcal{F}_{i})$. Moreover, the equivalence maps commute with the filtration morphisms, and therefore the persistence module of the Morse complex is isomorphic to the original module
\[
PH^{i\to j}_{k}(\mathbf{C}_{\bullet}(K;\mathcal{F}_{i}))\cong PH^{i\to j}_{k}(\mathbf{C}^{\Sigma}_{\bullet}(K;\mathcal{F}_{i}))
\]
\end{theorem}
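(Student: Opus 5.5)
We argue by induction on the number of pairs in $\Sigma$, exactly as in the proof of Claim \ref{claim:is_chain}. For a fixed filtration level $i$ we reduce to a single-pair step. Then we check that the single-pair equivalences are compatible with the maps $f^i_\bullet$. Chain-homotopy equivalences compose, and commuting squares paste together, so the full statement follows once the single-pair case is established for every $i$ with maps that commute with $f^i_\bullet$.

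\textbf{Single pair.} Adding a pair $(\sigma\vartriangleleft\tau)$ splits the chain spaces as in the proof of Claim \ref{claim:is_chain}, with $\partial_{p+1}=\begin{pmatrix}D&B\\C&A\end{pmatrix}$ and $A=F_i(\tau\geq\sigma)$ invertible. We define the standard Gaussian-elimination maps $\pi:\mathbf{C}_\bullet\to\mathbf{C}^\Sigma_\bullet$ and $\iota:\mathbf{C}^\Sigma_\bullet\to\mathbf{C}_\bullet$, and a homotopy $h$.
\begin{itemize}
\item In degree $p$: $\pi_p=(I\ \ -BA^{-1})$ and $\iota_p=\begin{pmatrix}I\\0\end{pmatrix}$.
\item In degree $p+1$: $\pi_{p+1}=(I\ \ 0)$ and $\iota_{p+1}=\begin{pmatrix}I\\-A^{-1}C\end{pmatrix}$.
\item In all other degrees $\pi$ and $\iota$ are the identity.
\item The homotopy $h_p:\mathbf{C}_p\to\mathbf{C}_{p+1}$ is $h_p=\begin{pmatrix}0&0\\0&A^{-1}\end{pmatrix}$, and $h$ vanishes elsewhere.
\end{itemize}

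We then verify three things, all by block multiplication.
\begin{itemize}
\item $\pi$ and $\iota$ are chain maps. The squares at degrees $p+2\to p+1$ and $p\to p-1$ use the identities $CX+AY=0$ and $\partial_p\partial_{p+1}=0$ from the proof of Claim \ref{claim:is_chain}. The middle square reduces to $\partial^\Sigma_{p+1}=D-BA^{-1}C$.
\item $\pi\iota=\mathrm{id}$, which is immediate.
\item $\mathrm{id}-\iota\pi=\partial h+h\partial$. This is checked separately in degrees $p+1$ and $p$, again using $Y=-A^{-1}CX$ and its row analogue from $\partial_p\partial_{p+1}=0$.
\end{itemize}

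\textbf{Naturality.} This is the step I expect to be the main obstacle. We need $f^i\circ\iota^{(i)}=\iota^{(i+1)}\circ f^i$ and the analogous identity for $\pi$. The blocks involved are built from $A_i^{-1}$, $B_i$ and $C_i$, so the key identity is
\[
\psi^i_\tau\circ F_i(\tau\geq\sigma)^{-1}=F_{i+1}(\tau\geq\sigma)^{-1}\circ\psi^i_\sigma .
\]
This follows from naturality of $\psi^i$, namely $\psi^i_\sigma F_i(\tau\geq\sigma)=F_{i+1}(\tau\geq\sigma)\psi^i_\tau$, combined with the fact that both extension maps are isomorphisms by $\mathcal{F}_\bullet$-compatibility (Definition \ref{def:filtration_compatible}). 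We multiply on the left by $F_{i+1}^{-1}$ and on the right by $F_i^{-1}$.

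Naturality of $\psi^i$ also gives $f^i B_i=B_{i+1}f^i$ and $f^i C_i=C_{i+1}f^i$ blockwise. Hence $\iota$, $\pi$ and $h$ all commute with $f^i$, and the reduced differentials satisfy $\partial^{\Sigma}_{i+1}f^i=f^i\partial^\Sigma_i$. In particular $f^i$ restricts to a morphism of reduced complexes, which justifies the phrase ``the inclusion morphisms \dots induce inclusion'' used in the text. A subtle point to double-check is that the induction order of pairs is the same at all levels $i$. It is, because $\Sigma$ is fixed and independent of $i$.

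\textbf{Persistence.} By induction we obtain chain-homotopy equivalences $\iota^{(i)}$, which are composites of the single-pair ones and commute with the filtration maps. Passing to homology, $\iota^{(i)}_*:H_k(\mathbf{C}^\Sigma(\mathcal{F}_i))\to H_k(\mathbf{C}(\mathcal{F}_i))$ is an isomorphism for each $i$, and $\eta_i\circ\iota^{(i)}_*=\iota^{(i+1)}_*\circ\eta^\Sigma_i$. This gives an isomorphism of persistence modules. Consequently $\iota^{(j)}_*$ maps $\operatorname{Im}(\eta^\Sigma_{i\to j})$ isomorphically onto $\operatorname{Im}(\eta_{i\to j})$, which is the claimed $PH^{i\to j}_k$ isomorphism.
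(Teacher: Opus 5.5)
Your single-pair maps $\pi,\iota,h$ are exactly the paper's $\Pi,\Psi,\Theta$, and in places you are more careful than the paper. You check that $\pi$ and $\iota$ are chain maps for $\partial^{\Sigma}$, which the paper never does. You derive $\psi^i_\tau F_i(\tau\geq\sigma)^{-1}=F_{i+1}(\tau\geq\sigma)^{-1}\psi^i_\sigma$ instead of just asserting that the filtration maps commute with everything. And you give the image argument for $PH^{i\to j}_k$ directly rather than citing Mischaikow--Nanda.

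\textbf{The gap is in passing from one pair to many.} It shows in the fact that your argument never uses acyclicity of $\Sigma$. In the inductive step, $\partial$ is already the reduced operator $\partial^{\Sigma_{\mathrm{old}}}$. So the pivot $A$ is the $(\sigma,\tau)$ block of that operator. You assert $A=F_i(\tau\geq\sigma)$, but earlier Schur complements can change this block.

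Without acyclicity the step genuinely fails. Take the circle with vertices $v_1,v_2$, edges $e_1,e_2$ with $\partial e_1=\partial e_2=v_2-v_1$, and the constant cosheaf. The pairing $\{(v_1\vartriangleleft e_1),(v_2\vartriangleleft e_2)\}$ has all extension maps invertible. After cancelling $(v_1\vartriangleleft e_1)$, the pivot of the second pair becomes $1-(1)(-1)^{-1}(-1)=0$. The fully reduced complex is $0$, although $H_1\neq 0$. The paper addresses exactly this point in Lemma~\ref{lem:generalization_acyclic}.

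To close the gap, carry through the induction the invariant that each block of the partially reduced operator is the sum over gradient paths through the already cancelled pairs, as in Definition~\ref{def:reduced_boundary}. A nonzero correction to the $(\sigma,\tau)$ block would then be a path $\tau\vartriangleright\sigma_1\vartriangleleft\tau_1\vartriangleright\cdots\vartriangleleft\tau_\ell\vartriangleright\sigma$ through cancelled pairs. Together with $(\sigma\vartriangleleft\tau)$ this closes a $V$-path, contradicting acyclicity. Hence $A_i=F_i(\tau\geq\sigma)$ at every level and in any cancellation order.

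With this in place, your naturality argument goes through verbatim. The commutation of $B_i,C_i,D_i$ with $f^i$ now comes from the inductive hypothesis $\partial^{\Sigma_{\mathrm{old}}}_{i+1}f^i=f^i\partial^{\Sigma_{\mathrm{old}}}_i$, not directly from naturality of $\psi^i$. The same invariant also identifies your iterated Schur complement with the path-sum differential in the statement. The point you flagged as subtle, that the cancellation order is the same for all $i$, is harmless; invertibility of the pivot is the real issue.
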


\begin{proof}[Proof of Theorem \ref{thm:chain_homotopy}]
Let us look first at the base case where the pairing $\Sigma$ contains a single matched pair $(\sigma\vartriangleleft\tau)$, where $\dim(\sigma)=p$ and $\dim(\tau)=p+1$. We will drop the filtration index $i$ throughout the first part of the proof for readability. We can do this since the pairing is $\mathcal{F}_{\bullet}\text{-compatible}$, and therefore the block $A$ is isomorphic for every filtration level.

First, we will construct explicitly the equivalence and homotopy maps using the block matrix representation defined in Claim \ref{claim:is_chain}.

We will define the homotopy operator $\Theta_{k}:\boldsymbol{C}_{k}\left(K,\mathcal{F}_{\bullet}\right)\to\boldsymbol{C}_{k+1}\left(K,\mathcal{F}_{\bullet}\right)$ (an operator of degree $+1$ on the original complex) such that it will act as a lift against the boundary direction through the invertible pairing. 
\[
\Theta_{p}=\begin{pmatrix}0 & 0\\
0 & A^{-1}
\end{pmatrix}
\]
And $\Theta_{k}=0$ for every dimension $k\neq p$.

Using $\Theta$ and the original boundary operator $\partial$, we will define the projection operator of the original complex onto itself as $P=Id_{\boldsymbol{C}_{\bullet}\left(K,\mathcal{F}_{\bullet}\right)}-\partial\Theta-\Theta\partial$. Where in the two affected dimensions, its matrix representation is

In dimension $p$, $\Theta_{p-1}=0$ and therefore $\Theta\partial=0$, so that 
\[
P_{p}=\begin{pmatrix}I & 0\\
0 & I
\end{pmatrix}-\begin{pmatrix}D & B\\
C & A
\end{pmatrix}\begin{pmatrix}0 & 0\\
0 & A^{-1}
\end{pmatrix}=\begin{pmatrix}I & 0\\
0 & I
\end{pmatrix}-\begin{pmatrix}0 & BA^{-1}\\
0 & I
\end{pmatrix}=\begin{pmatrix}I & -BA^{-1}\\
0 & 0
\end{pmatrix}
\]
And in dimension $p+1$, $\Theta_{p+1}=0$ and therefore $\partial\Theta=0$:
\[
P_{p+1}=\begin{pmatrix}I & 0\\
0 & I
\end{pmatrix}-\begin{pmatrix}0 & 0\\
0 & A^{-1}
\end{pmatrix}\begin{pmatrix}D & B\\
C & A
\end{pmatrix}=\begin{pmatrix}I & 0\\
0 & I
\end{pmatrix}-\begin{pmatrix}0 & 0\\
A^{-1}C & I
\end{pmatrix}=\begin{pmatrix}I & 0\\
-A^{-1}C & 0
\end{pmatrix}
\]

From the operator $P$ we will derive our equivalence maps relative to the reduced space:

The projection map $\Pi_{\bullet}:\boldsymbol{C}_{\bullet}\left(K,\mathcal{F}_{\bullet}\right)\to\boldsymbol{C}^{\Sigma}_{\bullet}\left(K,\mathcal{F}_{\bullet}\right)$ is obtained from restricting the output range of $P$ to the critical space, meaning taking the top block row
\[
\Pi_{p}=\begin{pmatrix}I & -BA^{-1}\end{pmatrix}
\]
The inclusion map $\Psi_{\bullet}:\boldsymbol{C}^{\Sigma}_{\bullet}\left(K,\mathcal{F}_{\bullet}\right)\to\boldsymbol{C}_{\bullet}\left(K,\mathcal{F}_{\bullet}\right)$ is obtained from restricting the input domain of $P$ to the critical space, taking the left block column
\[
\Psi_{p+1}=\begin{pmatrix}I\\
-A^{-1}C
\end{pmatrix}
\]

For the dimensions that were not affected by the pairing, the maps $\Pi,\Psi$ act as the identity map. To prove that these maps constitute a chain-homotopy equivalence, we must show two properties, presented in the following lemmas.

\begin{lemma} \label{lem:identity_reduced}
(Identity on the reduced complex) The composition $\Pi_{\bullet}\circ\Psi_{\bullet}$ is the exact identity map on the reduced complex $\boldsymbol{C}^{\Sigma}_{\bullet}\left(K,\mathcal{F}_{\bullet}\right)$.
\end{lemma}
\begin{proof}
We will multiply the matrices of the affected dimensions.

For dimension $p$: $\Pi_{p}\circ\Psi_{p}=\begin{pmatrix}I & -BA^{-1}\end{pmatrix}\begin{pmatrix}I\\
0
\end{pmatrix}=I$

For dimension $p+1$: $\Pi_{p+1}\circ\Psi_{p+1}=\begin{pmatrix}I & 0\end{pmatrix}\begin{pmatrix}I\\
-A^{-1}C
\end{pmatrix}=I$
\end{proof}

\begin{lemma} \label{lem:homotopy_original}
(The homotopy connection on the original complex) The composition $\Psi_{\bullet}\circ\Pi_{\bullet}$ is chain-homotopy equivalent to the identity on $\boldsymbol{C}_{\bullet}\left(K,\mathcal{F}_{\bullet}\right)$ with respect to the original boundary operator $\partial$,
\[
Id_{\boldsymbol{C}_{\bullet}\left(K,\mathcal{F}_{\bullet}\right)}-\Psi_{\bullet}\circ\Pi_{\bullet}=\partial\circ\Theta+\Theta\circ\partial
\]
\end{lemma}
\begin{proof}
We will calculate the left side:
\[
\Psi_{p}\circ\Pi_{p}=\begin{pmatrix}I\\
0
\end{pmatrix}\begin{pmatrix}I & -BA^{-1}\end{pmatrix}=\begin{pmatrix}I & -BA^{-1}\\
0 & 0
\end{pmatrix}
\]

So that 
\[
Id_{\boldsymbol{C}_{p}\left(K,\mathcal{F}_{\bullet}\right)}-\Psi_{p}\circ\Pi_{p}=\begin{pmatrix}I & 0\\
0 & I
\end{pmatrix}-\begin{pmatrix}I & -BA^{-1}\\
0 & 0
\end{pmatrix}=\begin{pmatrix}0 & BA^{-1}\\
0 & I
\end{pmatrix}
\]

We will calculate the right side. The homotopy contribution in dimension $p$ comes from $\partial\circ\Theta$ only:
\[
\partial_{p+1}\Theta_{p}=\begin{pmatrix}D & B\\
C & A
\end{pmatrix}\begin{pmatrix}0 & 0\\
0 & A^{-1}
\end{pmatrix}=\begin{pmatrix}0 & BA^{-1}\\
0 & I
\end{pmatrix}
\]

Overall the sides are exactly equal. A symmetric and identical proof holds for dimension $p+1$, where the homotopy contribution comes from $\Theta_{p}\partial_{p+1}$ and produces the matrix $\begin{pmatrix}0 & 0\\
A^{-1}C & I
\end{pmatrix}$.
\end{proof}

\begin{lemma} \label{lem:generalization_acyclic}
(Generalization to any complex acyclic pairing) There exists a full chain-homotopy equivalence for the reduction of any acyclic partial pairing $\Sigma$.
\end{lemma}
\begin{proof}
According to discrete Morse theory \cite{forman1998morse}, every acyclic partial pairing $\Sigma$ induces a partial order relation (a directed acyclic graph - $DAG$) on its elements, defined by the direction of the gradient paths. From a basic combinatorial property of finite acyclic graphs, there always exists a minimal element in them - in other words, a matched pair from which no gradient paths leave to other pairs \cite{kozlov2008combinatorial}. We will perform the reduction of the pairs by induction, step by step, according to a topological sort that will give us the minimal elements and upwards.

The requirement of acyclicity of the original pairing ensures that the reduction of a single pair (which updates the blocks $D,B,C$ of the boundary operator in the current step) only merges existing paths, and will never create a new circular gradient path (Self-loop) \cite{kozlov2008combinatorial,skoldberg2006morse}.

Since we reduce the pairs according to the minimal order, pairs that have not yet been reduced are not part of a continuation path for a pair that has already been reduced. Therefore, updating the matrices / row reduction does not change the original values of the pairing maps that have not yet been handled. This fact ensures that the block $A$ of future pairs in the queue remains unchanged ($A=F_i\left(\tau\geq\sigma\right)$), and therefore remains an invertible isomorphism throughout the entire process, which allows the continuation of the reduction.

Since a finite composition of chain-homotopy equivalences is itself a chain-homotopy equivalence \cite{nanda2022tda,munkres1984elements,ghrist2014elementary}, inductive application of Lemmas \ref{lem:identity_reduced} and \ref{lem:homotopy_original} over all the pairs in the pairing concludes after a finite number of steps. This sequence of compositions is well defined, and induces a global chain-homotopy equivalence that maps the entire original complex to the completely reduced complex.
\end{proof}

\textit{End of the proof of the theorem} \textit{(Isomorphism of the persistence modules)}:

We have shown that there exists a chain-homotopy equivalence for every single index step $i$. Now we will prove that it is preserved along the filtration.

Since the pairing was defined as $\mathcal{F}_{\bullet}$-\text{compatible}, the blocks $A_{i}$ are isomorphisms for every filtration level $i$. The equivalence maps $\Pi_{i}$ and $\Psi_{i}$ are completely defined by compositions and sums of the boundary operator and of the extension maps $\mathcal{F}_{i}(\tau\geq\sigma)$ and their inverses.

Since the cosheaf $\mathcal{F}$ is a functor, the filtration morphisms (inclusions) $\psi^{i}:\mathcal{F}_{i}\hookrightarrow\mathcal{F}_{i+1}$ are natural transformations. Therefore, the filtration morphisms commute with all the extension and boundary maps defining $\Pi$ and $\Psi$. Algebraically, we get the commutative diagram
\[
\Pi_{i+1}\circ f^{i}_{\bullet}=\tilde{f}^{i}_{\bullet}\circ\Pi_{i}
\]

where $f^{i}$ and $\tilde{f}^{i}$ are the inclusion maps of the original and reduced complex, respectively.

Since the homotopy equivalence commutes with the filtration functor, it is a filtered-chain-equivalence. As shown by \cite{mischaikow2013morse}, this equivalence induces a natural isomorphism on the persistent homology groups across all dimensions and indices. So that the persistence module of the reduced complex is isomorphic to the original module.
\end{proof}

\section{Algorithms}

It is time To reap the wheat and bring in the sheaves :) . We will use the proofs above to generate an algorithm that will allow us to reduce the chain complex in favor of optimizing the persistence module computation. Finding a perfect Morse matching is an NP-hard task \cite{Joswig2004Computing}, nearly as challenging as finding a soulmate in real life. Consequently, we shall follow the age-old advice of the elderly aunts: we settle. This compromise remains highly beneficial, as in most cases, it allows us to significantly reduce the number of critical cells $m_k$ and the resulting density measure $\tilde{m}$. We will base ourselves on the algorithm \texttt{Scythe} \cite{curry2015discretemorsetheorycomputing} and we will adapt it to our needs. The central idea is a scan similar to $BFS$ that starts from the minimal cells. The moment it finds a valid pair, it performs the Schur complement reduction on the corresponding matrices. Thus the need to build a global and heavy $DAG$ is saved from us. 

In the first stage, we will define a function that will check the compatibility of the pairing.

\begin{table}[ht]
\centering
{\bf Algorithm: }{\tt TryPairingAndCache} \label{alg:try_pairing} \\
{\bf In: }A cell $y$ and an active face of it $x$, and the effective boundary maps $\setof{F_i}$. \\
{\bf Out: } True/False, and caching matrices.  \\
{
\small
\begin{tabular}{|c|l|}
\hline
01 & {\bf initialize} Temp\_Cache = empty collection \\
02 & {\bf for each} level $i = 0$ to $n$ \\
03 & \tab {\bf if} the matrix $F_i(y \geq x)$ is NOT invertible \\
04 & \tab \tab clear Temp\_Cache \\
05 & \tab \tab {\bf return} False \\
06 & \tab {\bf else} \\
 & \tab \tab\textcolor{gray}{// For $\$-A^{1}$ Block} \\
07 & \tab \tab {\bf calculate} and add to Temp\_Cache $\left( -F_i(y \geq x)^{-1} \right)$ \\
08 & \tab {\bf end if} \\
09 & {\bf end for} \\
10 & Cached = Temp\_Cache; {\bf return} True \\
\hline
\end{tabular}
}
\end{table}

The function receives two cells that can be a Morse pair, and checks if they are $\mathcal{F}_{\bullet}-\text{compatible}$. We will require that the map be an iso' for the entire length of the filtration; notice that we save the inverse map in the cache to ease the computations later. The correctness follows directly from Definition \ref{def:filtration_compatible}.

Since checking invertibility and calculating an inverse takes $O\left(D^{\omega}\right)$ we will get $O\left(n\cdot D^{3}\right)$.

\begin{table}[ht]
\centering
{\bf Algorithm: }{\tt ReducePair} \label{alg:reduce_pair} \\
{\bf In: }A compatible pair $(x^\red,y^\red)$ that passed function \ref{alg:try_pairing}. \\
{\bf Out: } Modifies the active boundary graph and matrices (In-place).  \\
{
\small
\begin{tabular}{|c|l|}
\hline
01 & {\bf for each} cell $z \in x^\red_+ \setminus \setof{y^\red}$ in the current active graph \\
02 & \tab {\bf for each} cell $w \in y^\red_- \setminus \setof{x^\red}$ in the current active graph \\
03 & \tab \tab {\bf if} the incidence relation $w \prec z$ does not exist in the active graph \\
04 & \tab \tab \tab {\bf add} relation $w \prec z$ to the active graph \\
05 & \tab \tab {\bf end if} \\
06 & \tab \tab {\bf for each} level $i = 0$ to $n$ \\
07 & \tab \tab \tab Inverse\_Block = Cached $\left( -F_i(y^\red \geq x^\red)^{-1} \right)$ \\
08 & \tab \tab \tab {\bf replace} $F_i(z \geq w)$ by $F_i(z \geq w) + F_i(y^\red \geq w) \circ \text{Inverse\_Block} \circ F_i(z \geq x^\red)$ \\
09 & \tab \tab {\bf end for} \\
10 & \tab {\bf end for} \\
11 & {\bf end for} \\
12 & {\bf remove} $x^\red$ and $y^\red$ from the active boundary graph \\
\hline
\end{tabular}
}
\end{table}

The idea is that when $(x^{\star},y^{\star})$ is a valid pairing, we can reduce it. We create bypass paths for all the cells that went into $y^{\star}$ and had to go out of $x^{\star}$, connecting them directly instead of routing through the paired cells. Because the original boundary map goes from dimension $p+1$ to $p$, the bypass path from an upper neighbor $z$ to a lower neighbor $w$ flows through $x^{\star}$ and $y^{\star}$. To prevent dimension mismatch, the composition of linear maps must be applied in the correct algebraic order (from right to left): first mapping from $z$ to $x^{\star}$, then applying the inverse pairing from $x^{\star}$ to $y^{\star}$, and finally mapping from $y^{\star}$ to $w$. The bypass computation is performed for each filtration level $i$ separately using the inverse matrices stored in the cache.

The correctness stems from the fact that the update is exactly the Schur complement formula we proved in Claim \ref{claim:is_chain}. Since the update is performed for every $i$ in parallel, the equivalence is obtained from the Chain-Homotopy Equivalence Theorem (\ref{thm:chain_homotopy}).

The number of relevant cofaces of $x^{*}$ is bounded by $p$ (the maximal number of cells in a coboundary). The number of relevant faces is bounded by $m_{k}$. For every pair $\left(z,w\right)$ we perform two matrix multiplications, for each of the $n$ filtration levels. In total we will get $O\left(p\cdot m_{k}\cdot n\cdot D^{\omega}\right)$.

\begin{table}[ht]
\centering
{\bf Algorithm: }{\tt FilteredScythe} \label{alg:filtered_scythe} \\
{\bf In: }The original complex $K$ and the boundary maps $\setof{F_i}$. \\
{\bf Out: } The reduced complex $K^c$ and updated effective boundary matrices.  \\
{
\small
\begin{tabular}{|c|l|}
\hline
01 & {\bf initialize} empty queue: Que \\
02 & {\bf while} there are non-critical active cells left in $K$ \\
03 & \tab {\bf find} a minimal non-critical active cell $c$ in $K$ \\
04 & \tab {\bf mark} $c$ as Critical \\
05 & \tab Que.Clear() \\
06 & \tab Que.Enqueue($c$) \\
07 & \tab {\bf while} Que is not empty \\
08 & \tab \tab $y =$ Que.Dequeue() \\
09 & \tab \tab {\bf find} all active non-critical lower neighbors $x$ of $y$ in the current boundary graph \\
10 & \tab \tab {\bf if} there is EXACTLY ONE such $x$, AND TryPairingAndCache($x, y, \setof{F_i}$) is True \\
11 & \tab \tab \tab \textcolor{gray}{// We found a valid Morse pair in the active graph!} \\
12 & \tab \tab \tab Que.EnqueueAll($x_+ \setminus \setof{y}$) \\
13 & \tab \tab \tab ReducePair($x, y, \setof{F_i}$) \\
14 & \tab \tab {\bf end if} \\
15 & \tab \tab \textcolor{gray}{// Propagate search upwards regardless of pairing} \\
16 & \tab \tab {\bf if} $y$ is active: Que.EnqueueAll($y_+$) \\
17 & \tab {\bf end while} \\
18 & {\bf end while} \\
19 & {\bf return} remaining active cells (which are $K^c$) and the fully reduced matrices. \\
\hline
\end{tabular}
}
\end{table}

For the complexity bound, within each outer iteration enqueue only $c$ itself or non-critical cells of dimension $\dim c+1$, each at most once; reset the enqueue flags when Que is cleared \cite{curry2015discretemorsetheorycomputing}. The numbers $m_k$ refer to this variant, which can leave more critical cells than unrestricted queuing. Neighbor sets and $F_i$ refer to the current active boundary graph and its effective signed blocks; initialize missing blocks to zero.

The algorithm implements a greedy approach right here, similar to $BFS$, and scans the fixed complex $K$ from the minimal cells upwards. When it locates a cell $y$ with a single non-critical face $x$, it checks the compatibility of the pairing throughout the filtration. If it is an $\mathcal{F}_{\bullet}-\text{compatible}$ pairing, the algorithm fixes locally immediately, and enters the neighbors affected by the change into the queue. The cells that will remain in $K$ at the end of the scan are exactly the critical cells.

The algorithm is correct since the scan implements the inductive process of our proofs, and satisfies the necessary conditions: The first is acyclicity - the 'single non-critical face' requirement ensures that the paths are flowed in well-defined directions without cycles. In addition, the $BFS$ scan implements an implicit topological sort during the run. This sort ensures us that we do not change the extension matrices $A$ of faces belonging to pairings we have not yet checked - Lemma \ref{lem:generalization_acyclic} and the Chain-Homotopy Equivalence Theorem (\ref{thm:chain_homotopy}).

% --- REPLACE THE COMPLEXITY ANALYSIS AT THE END OF SECTION 4 ---

% --- COMPLEXITY ANALYSIS ---

To precisely evaluate the complexity, one must address the potential fill-in of the active boundary graph during reductions. While reducing a pair introduces new edges, the topological constraints of the Scythe algorithm inherently bound this density. A pairing $(x,y)$ is only executed when $x$ is the \textit{unique} non-critical face of $y$. This guarantees that any new incidence relations $w \prec z$ introduced by \texttt{ReducePair} involve lower neighbors $w$ that are \textit{already critical}. Consequently, non-critical cells never acquire new upper neighbors during the process. The number of upper neighbors $x_+$ is thus strictly bounded by the original complex parameter $p$, and $|y_-\setminus\{x\}|$ is bounded by the number of critical cells in that dimension, $m_k$.

Let $r_k$ count cancelled pairs with lower dimension $k$, and let $N_k,E_k$ count original $k$-cells and incidences between dimensions $k$ and $k+1$. Since $\sum_k r_k\leq N/2$, bypass updates cost $O((n+1)pD^\omega\sum_k r_km_k)\subseteq O((n+1)NpMD^\omega)$. The stated queuing rules allow at most $\sum_k m_kN_{k+1}\leq NM$ candidate tests, each costing $O((n+1)D^\omega)$. Graph scanning costs $O(\sum_k m_kE_k+\sum_k N_{k+1}m_k^2)\subseteq O(NpM+N\tilde m)$; selecting starting cells costs $O(NM)$, and queue and deletion work obey the same combined bound. Thus the total time is $O((n+1)NpMD^\omega+N\tilde m)$.

The space complexity must account for the dense blocks generated by the Schur complements and the storage of the original operators across all filtration steps. Since the maximum number of critical lower neighbors is bounded by the total number of critical cells, the worst-case space complexity evaluates to $O((n+1)N p D^2)$ for the initial maps, and $O((n+1)N M D^2)$ for the dynamic fill-in and caching structures, with $O((n+1)N(p+M)D^2)$ total peak storage including the input filtration maps.

\section{Future Research}

The application of discrete Morse theory on a cosheaf filtration opens the gate to many extension possibilities. Inspired by \cite{carlsson2009theory}, it is natural to examine what happens in a multi-dimensional filtration, and how one can define a consistent Morse pairing over a family of cosheaves defined on a lattice of parameters; a similar direction is a hybrid model - where both the module and the cosheaf evolve over time. Additionally the local nature of the reduction operation on the blocks suggests a parallel implementation, similarly to \cite{bauer2021ripser,bauer2014persistence}.
\small
\bibliographystyle{plain}
\bibliography{TDA_bib}

@book{ghrist2014elementary,
  title={Elementary Applied Topology},
  author={Ghrist, Robert},
  edition={1.0},
  year={2014},
  publisher={CreateSpace}
}

@book{jollenbeck2009minimal,
  title={Minimal resolutions via algebraic discrete Morse theory},
  author={J{\"o}llenbeck, Michael and Welker, Volkmar},
  number={923},
  year={2009},
  publisher={American Mathematical Society},
  series={Memoirs of the American Mathematical Society}
}

@book{munkres1984elements,
  title={Elements of Algebraic Topology},
  author={Munkres, James R.},
  year={1984},
  publisher={Addison-Wesley}
}

@book{kozlov2008combinatorial,
  title={Combinatorial Algebraic Topology},
  author={Kozlov, Dmitry},
  volume={21},
  year={2008},
  publisher={Springer Science \& Business Media},
  doi={10.1007/978-3-540-71962-5},
  series={Algorithms and Computation in Mathematics}
}

@book{maclane1998categories,
  title={Categories for the Working Mathematician},
  author={Mac Lane, Saunders},
  edition={2nd},
  series={Graduate Texts in Mathematics},
  volume={5},
  year={1998},
  publisher={Springer-Verlag}
}

@book{riehl2016category,
  title={Category Theory in Context},
  author={Riehl, Emily},
  year={2016},
  publisher={Dover Publications},
  series={Aurora: Dover Modern Math Originals}
}

@book{robinson2014topological,
  title={Topological Signal Processing},
  author={Robinson, Michael},
  year={2014},
  publisher={Springer Berlin Heidelberg}
}

@book{weibel1994introduction,
  title={An Introduction to Homological Algebra},
  author={Weibel, Charles A},
  year={1994},
  publisher={Cambridge University Press}
}

@article{cang2017topologynet,
  title={Topologynet: Topology based deep convolutional and multi-task neural networks for biomolecular property predictions},
  author={Cang, Zixuan and Wei, Guo-Wei},
  journal={PLoS Computational Biology},
  volume={13},
  number={7},
  pages={e1005690},
  year={2017},
  publisher={Public Library of Science}
}

@article{cang2018integration,
  title={Integration of element specific persistent homology and machine learning for protein-ligand binding affinity prediction},
  author={Cang, Zixuan and Wei, Guo-Wei},
  journal={International Journal for Numerical Methods in Biomedical Engineering},
  volume={34},
  number={2},
  pages={e2914},
  year={2018},
  publisher={Wiley Online Library}
}

@article{clough2020topological,
  title={A topological loss function for deep-learning based image segmentation using persistent homology},
  author={Clough, James R and Byrne, Nicholas and Oksuz, Ilkay and Zimmer, Veronica A and Schnabel, Julia A and King, Andrew P},
  journal={IEEE Transactions on Pattern Analysis and Machine Intelligence},
  volume={44},
  number={12},
  pages={8766--8778},
  year={2020},
  publisher={IEEE}
}

@article{dabaghian2012topological,
  title={A topological paradigm for hippocampal spatial map formation using persistent homology},
  author={Dabaghian, Yuri and M{\'e}moli, Facundo and Frank, Loren and Carlsson, Gunnar},
  journal={PLoS Biology},
  volume={10},
  number={1},
  pages={e1001245},
  year={2012},
  publisher={Public Library of Science}
}

@article{desilva2006coordinate,
  title={Coordinate-free coverage in sensor networks with controlled boundaries via homology},
  author={de Silva, Vin and Ghrist, Robert},
  journal={The International Journal of Robotics Research},
  volume={25},
  number={12},
  pages={1205--1222},
  year={2006},
  publisher={SAGE Publications}
}

@article{zomorodian2005computing,
  title={Computing persistent homology},
  author={Zomorodian, Afra and Carlsson, Gunnar},
  journal={Discrete \& Computational Geometry},
  volume={33},
  number={2},
  pages={249--274},
  year={2005},
  publisher={Springer}
}

@article{fersztand2024harder,
  title={Harder–Narasimhan filtrations of persistence modules},
  author={Fersztand, Marc and Jacquard, Emile and Nanda, Vidit and Tillmann, Ulrike},
  journal={Transactions of the London Mathematical Society},
  volume={11},
  number={1},
  year={2024},
  month={dec},
  doi={10.1112/tlm3.70003},
  publisher={Wiley}
}

@article{forman1998morse,
  title={Morse theory for cell complexes},
  author={Forman, Robin},
  journal={Advances in Mathematics},
  volume={134},
  number={1},
  pages={90--145},
  year={1998},
  publisher={Elsevier}
}

@article{herlihy1999topological,
  title={The topological structure of asynchronous computability},
  author={Herlihy, Maurice and Shavit, Nir},
  journal={Journal of the ACM (JACM)},
  volume={46},
  number={6},
  pages={858--923},
  year={1999},
  publisher={ACM}
}

@article{mischaikow2013morse,
  title={Morse theory for filtrations and efficient computation of persistent homology},
  author={Mischaikow, Konstantin and Nanda, Vidit},
  journal={Discrete \& Computational Geometry},
  volume={50},
  number={2},
  pages={330--353},
  year={2013},
  publisher={Springer}
}

@article{skoldberg2006morse,
  title={Morse theory from an algebraic viewpoint},
  author={Sk{\"o}ldberg, Emil},
  journal={Transactions of the American Mathematical Society},
  volume={358},
  number={1},
  pages={115--129},
  year={2006},
  doi={10.1090/S0002-9947-04-03612-4}
}

@article{townsend2020representation,
  title={Representation of molecular structures with persistent homology for machine learning applications in chemistry},
  author={Townsend, Jacob and Micucci, Christian P and Hymel, John H and Maroulas, Vasileios and Vogiatzis, Konstantinos D},
  journal={Nature Communications},
  volume={11},
  number={1},
  pages={3230},
  year={2020},
  publisher={Nature Publishing Group}
}

@article{wei2025persistent,
  title={Persistent sheaf Laplacians},
  author={Wei, Xiaotian and Wei, Guo-Wei},
  journal={Foundations of Data Science},
  year={2025},
  publisher={AIMS}
}

@article{carlsson2009theory,
  title={The theory of multidimensional persistence},
  author={Carlsson, Gunnar and Zomorodian, Afra},
  journal={Discrete \& Computational Geometry},
  volume={42},
  number={1},
  pages={71--93},
  year={2009},
  publisher={Springer}
}

@article{bauer2021ripser,
  title={Ripser: efficient computation of Vietoris-Rips persistence barcodes},
  author={Bauer, Ulrich},
  journal={Journal of Applied and Computational Topology},
  volume={5},
  number={3},
  pages={391--423},
  year={2021},
  publisher={Springer}
}

@inproceedings{bauer2014persistence,
  title={Persistence by parts: Multiscale feature detection via distributed persistent homology},
  author={Bauer, Ulrich and Kerber, Michael and Reininghaus, Jan},
  booktitle={Proceedings of the thirtieth annual symposium on Computational geometry},
  pages={477--486},
  year={2014},
  organization={ACM},
  doi={10.1145/2582112.2582121}
}

@inproceedings{cohensteiner2005stability,
  title={Stability of persistence diagrams},
  author={Cohen-Steiner, David and Edelsbrunner, Herbert and Harer, John},
  booktitle={Proceedings of the twenty-first annual symposium on Computational geometry},
  pages={263--271},
  year={2005}
}

@misc{ayzenberg2025sheaftheorydeepgeometry,
  title={Sheaf theory: from deep geometry to deep learning},
  author={Anton Ayzenberg and Thomas Gebhart and German Magai and Grigory Solomadin},
  year={2025},
  eprint={2502.15476},
  archivePrefix={arXiv},
  primaryClass={math.AT},
  url={https://arxiv.org/abs/2502.15476}
}

@misc{chen2025algebraicmorsetheoryhomological,
  title={Algebraic Morse theory via Homological Perturbation Lemma},
  author={Jun Chen and Yuming Liu and Guodong Zhou},
  year={2025},
  eprint={2404.10165},
  archivePrefix={arXiv},
  primaryClass={math.KT},
  url={https://arxiv.org/abs/2404.10165}
}

@article{cooperband2023cosheaf,
  title={A cosheaf theory of reciprocal figures: Planar and higher genus graphic statics},
  author={Cooperband, Zachary and Ghrist, Robert and Hansen, Jakob},
  journal={arXiv preprint arXiv:2311.12946},
  year={2023}
}

@misc{curry2015discretemorsetheorycomputing,
  title={Discrete Morse theory for computing cellular sheaf cohomology},
  author={Justin Curry and Robert Ghrist and Vidit Nanda},
  year={2015},
  eprint={1312.6454},
  archivePrefix={arXiv},
  primaryClass={math.AT},
  url={https://arxiv.org/abs/1312.6454}
}

@article{Joswig2004Computing,
  title = {Computing Optimal Discrete Morse Functions},
  author = {Joswig, Michael and Pfetsch, Marc E.},
  journal = {Electronic Notes in Discrete Mathematics},
  volume = {17},
  pages = {191--195},
  year = {2004},
  issn = {1571-0653},
  doi = {10.1016/j.endm.2004.03.038},
  url = {https://doi.org/10.1016/j.endm.2004.03.038}
}

@misc{dey2024efficientalgorithmscomplexespersistence,
  title={Efficient Algorithms for Complexes of Persistence Modules with Applications},
  author={Tamal K. Dey and Florian Russold and Shreyas N. Samaga},
  year={2024},
  eprint={2403.10958},
  archivePrefix={arXiv},
  primaryClass={math.AT},
  url={https://arxiv.org/abs/2403.10958}
}

@misc{hu2026sheaftheoretictopologicalperspectivecomplex,
  title={A Sheaf-Theoretic and Topological Perspective on Complex Network Modeling and Attention Mechanisms in Graph Neural Models},
  author={Chuan-Shen Hu},
  year={2026},
  eprint={2601.21207},
  archivePrefix={arXiv},
  primaryClass={cs.LG},
  url={https://arxiv.org/abs/2601.21207}
}

@misc{nanda2022tda,
  author={Nanda, Vidit},
  title={Computational Algebraic Topology (Lecture Notes)},
  howpublished={\url{https://people.maths.ox.ac.uk/nanda/cat/TDANotes.pdf}},
  note={Accessed: April 2026}
}

@misc{russold2022persistentsheafcohomology,
  title={Persistent sheaf cohomology},
  author={Florian Russold},
  year={2022},
  eprint={2204.13446},
  archivePrefix={arXiv},
  primaryClass={math.AT},
  url={https://arxiv.org/abs/2204.13446}
}

@misc{su2025topologicaldataanalysistopological,
  title={Topological Data Analysis and Topological Deep Learning Beyond Persistent Homology -- A Review},
  author={Zhe Su and Xiang Liu and Layal Bou Hamdan and Vasileios Maroulas and Jie Wu and Gunnar Carlsson and Guo-Wei Wei},
  year={2025},
  eprint={2507.19504},
  archivePrefix={arXiv},
  primaryClass={math.HO},
  url={https://arxiv.org/abs/2507.19504}
}

@misc{yegnesh2017persistencesheaves,
  title={Persistence and Sheaves},
  author={Karthik Yegnesh},
  year={2017},
  eprint={1612.03522},
  archivePrefix={arXiv},
  primaryClass={math.AT},
  url={https://arxiv.org/abs/1612.03522}
}

@phdthesis{nanda2012discrete,
  title={Discrete Morse theory for filtrations and cell complexes},
  author={Nanda, Vidit},
  year={2012},
  school={University of Pennsylvania},
  address={Philadelphia, PA},
  note={Available at ScholarlyCommons},
  url={https://repository.upenn.edu/edissertations/553}
}

@phdthesis{yoon2018cellular,
  title={Cellular Sheaves and Cosheaves for Distributed Topological Data Analysis},
  author={Yoon, Hee Rhang},
  year={2018},
  school={University of Pennsylvania},
  address={Philadelphia, PA},
  note={Supervisor: Robert W. Ghrist}
}

@phdthesis{henselman2017matroids,
  title={Matroids and Canonical Forms: Theory and Applications},
  author={Henselman-Petrusek, Gregory F.},
  year={2017},
  school={University of Pennsylvania},
  address={Philadelphia, PA},
  note={Supervisor: Robert W. Ghrist}
}

@misc{curry2014sheavescosheavesapplications,
      title={Sheaves, Cosheaves and Applications}, 
      author={Justin Curry},
      year={2014},
      eprint={1303.3255},
      archivePrefix={arXiv},
      primaryClass={math.AT},
      url={https://arxiv.org/abs/1303.3255}, 
}

\end{document}